\documentclass[a4paper, reqno]{amsart}
\usepackage{amsmath, amssymb, eucal, amscd, amstext, enumerate, mathrsfs, yfonts, amsfonts, verbatim, tikz, comment}

\usepackage[plainpages=false,colorlinks,hyperindex,pdfpagemode=None,bookmarksopen,linkcolor=red,citecolor=blue,urlcolor=blue]{hyperref}

\newtheorem{thm}{Theorem}
\newtheorem{lem}[thm]{Lemma}

\newtheorem{prop}[thm]{Proposition}

\newtheorem{rem}[thm]{Remark}
\newtheorem{defn}[thm]{Definition}

\newcommand{\smnoind}{\smallskip\noindent}

\newcommand{\id}{\mathrm{id}}

\newcommand{\CE}{\mathcal{E}}

\newcommand{\KG}{\mathbb{G}}

\newcommand{\KI}{\mathfrak{I}}
\newcommand{\KJ}{\mathfrak{J}}

\newcommand{\ME}{\mathrm{E}}

\newcommand{\BT}{\mathbf{T}}

\newcommand{\I}{\mathrm{I}}

\begin{document}

\title{A note on Boolean inverse monoids and ample groupoids}

\author[Ng and Tian]{Chi-Keung Ng \and Rui Tian}

\address[Chi-Keung Ng]{Chern Institute of Mathematics and LPMC, Nankai University, Tianjin 300071, China.}
\email{ckng@nankai.edu.cn; ckngmath@hotmail.com}

\address[Rui Tian]{Chern Institute of Mathematics, Nankai University, Tianjin 300071, China.}
\email{1120200009@mail.nankai.edu.cn}

\keywords{uniformly locally finite coarse spaces; inverse semigroups; Boolean inverse monoids}

\subjclass[2020]{Primary: 20M18, 51F30, 51K05}

\begin{abstract}
It is a study note detailing a connection between Boolean inverse $\wedge$-monoids and ample groupoids. 
\end{abstract}

\maketitle


In this note, we present a self-contained treatment for a connection between Boolean inverse $\wedge$-monoids and ample groupoids   (assuming only \cite[Lemma 2.2(2)]{L10}), which was stated in \cite[Theorem 3.3(2)]{Steinb} in the more general situation of Boolean inverse monoids, whose proof requires to dig through \cite{Exel-inv-sg}, \cite{Exel-Reconst} and \cite{LL}. 

After we finished this note, we learn that a self-contained proof for \cite[Theorem 3.3]{Steinb} was also given in \cite[Theorem A.10]{Rena}. 
In fact, \cite[Theorem A.10]{Rena} establishes an equivalence between the category of Boolean inverse semigroups with morphisms being Boolean inverse semigroup morphisms, and the category of ample groupoids with morphisms being multiplier actions with proper anchor (which is different from \cite[Theorem 3.4]{Steinb}).

\smallskip

Let us recall that a \'{e}tale groupoid $\KG$ is \emph{ample} if its unit space $\KG^{(0)}$ is a totally disconnected compact Hausdorff space.
It is well-known that the set $\Gamma_c(\KG)$ of all compact open bisections of an ample groupoid $\KG$ forms a Boolean inverse monoid (see e.g., \cite[Proposition 2.18(8)]{L10}, \cite[Theorem 4.4(2)]{L20} or \cite{Rena}). 

\smallskip

Recall that a semigroup $S$ is an \emph{inverse semigroup} if for each $\phi\in S$, there is a unique element $\phi^{-1}\in S$ satisfying 
\begin{equation*}
	\phi\phi^{-1}\phi = \phi\qquad \text{and} \qquad \phi^{-1}\phi\phi^{-1} = \phi^{-1}.
\end{equation*}
An inverse semigroup $S$ is called an \emph{inverse monoid} if it has an identity $1$.
There is a \emph{canonical ordering} on an inverse semigroup given by 
\begin{equation*}
	\phi\leq \psi \quad \text{whenever}\quad \phi = \psi\phi^{-1}\phi.
\end{equation*}
The commutative subsemigroup 
	$$\ME(S):= \{p\in S: p^2 = p\}$$ 
of $S$ is a  meet semilattice under the canonical order; actually, 
$$p \wedge q = pq \qquad (p,q\in \ME(S)).$$

Let $S$ be an inverse semigroup with a zero, denoted by $0$.
Two elements $\phi$ and $\psi$ in $S$ are said to be \emph{orthogonal}, and denoted by $\phi\bot \psi$,  if 
$$\phi\psi^{-1} = \phi^{-1}\psi = 0.$$

Basic information on inverse semigroups can be found in \cite{Laws}.

\smallskip

\begin{defn}\label{defn:Boolean}
Let $S$ be an inverse semigroup with a zero. 

\smnoind
(a) $S$ is said to be 
	\emph{additive} (or \emph{finitely orthogonally complete}) if for $\phi,\psi\in S$ with $\phi\bot \psi$, the supremum $\phi\vee \psi$ of $\phi$ and $\psi$ in $S$ exists, and one has $\chi(\phi\vee \psi) = (\chi\phi) \vee (\chi\psi)$ as well as $(\phi\vee \psi)\chi = (\phi \chi) \vee (\psi \chi)$ for every $\chi\in S$. 

\smnoind
(b)	$S$ is called a \emph{Boolean inverse monoid} (respectively, \emph{Boolean inverse $\wedge$-monoid}) if $\ME(S)$ 
		is a Boolean algebra under the canonical ordering, and $S$ is an additive inverse semigroup 
		(respectively, which is a meet semilattice under the canonical ordering).
\end{defn}


\smallskip

Note that a semigroup $S$ will have an identity when $\ME(S)$ is a Boolean algebra. 

\smallskip

By the discussion concerning finite orthogonal completeness after Condition (BM3) in \cite[p.386]{L10}, it suffices to assume, in the definition of Boolean inverse $\wedge$-monoid, the existence of $\phi \vee \psi$ whenever $\phi\bot \psi$, instead of additivity as above. 
Thus, our notion of Boolean inverse $\wedge$-monoids coincides with the one in \cite{LS}, and coincide with notion of Boolean inverse monoids in \cite{L10, L12,LL}.

\smallskip

Let $S$ be a Boolean inverse monoid.
The construction as in \cite{Rena} and \cite{Steinb} associates with $S$ an ample groupoid (a similar construction can be found in  \cite[Theorem 3.3.2]{Pate} and \cite[\S 2.6]{STY}). 
Let us recall this construction in the following. 

It is well-known that the topological space $\widehat{\ME(S)}$ of characters on the Boolean algebra $\ME(S)$ is a totally disconnected compact Hausdorff space  (see e.g., Lemma 1 of  \cite[Chapter 34]{GH}).  
We set 
$$\widehat{\ME(S)}_\phi:= \big\{x\in \widehat{\ME(S)}: x(\phi^{-1}\phi) =1 \big\}\qquad (\phi\in S).$$
For each $\phi\in S$, there is a homeomorphism $\alpha_\phi:\widehat{\ME(S)}_\phi\to \widehat{\ME(S)}_{\phi^{-1}}$
given by 
\begin{equation}\label{eqt:homeo-alpha-phi}
\alpha_\phi(x)(p) := x(\phi^{-1}p\phi) \qquad \big(x\in \widehat{\ME(S)}_\phi; p\in \ME(S)\big).
\end{equation}

\smallskip

Let us put 
$$S\ast \widehat{\ME(S)}:= \big\{(\phi,x)\in S\times \widehat{\ME(S)}: x\in \widehat{\ME(S)}_\phi \big\}.$$
Define an equivalence relation $\sim$ on $S\ast \widehat{\ME(S)}$ such that $(\phi,x)\sim (\psi,y)$ if and only if
$$x=y \text{ and there is $p\in \ME(S)$ with $x(p) = 1$ and $\phi p = \psi p$}.$$
Denote by $[\phi,x]$ the equivalence class under $\sim$ that contains $(\phi,x)$. 
The set 
$$\KG(S):= S\ast \widehat{\ME(S)}/\sim$$ 
of all such equivalence classes is a groupoid such that (see \eqref{eqt:homeo-alpha-phi})
$$\big([\phi,x], [\psi,y]\big)\in \KG(S)^{(2)}\quad \text{if and only if} \quad x = \alpha_\psi(y),$$ 
with the multiplication given by 
\begin{equation}\label{eqt:prod-coarse-gpd}
[\phi,x]\cdot [\psi,y] := [\phi\psi,y] \qquad \big(\big([\phi,x], [\psi,y]\big)\in \KG(S)^{(2)}\big).
\end{equation}
In this case, one has 
\begin{equation}\label{eqt:base-char}
	\KG(S)^{(0)} = \big\{[1,x]\in \KG(S): x\in \widehat{\ME(S)} \big\}
\end{equation}
as well as
\begin{equation}\label{eqt:inver}
[\phi,x]^{-1} =  [\phi^{-1}, \alpha_\phi(x)].
\end{equation}

\smallskip

We will consider the topology on $\KG(S)$ generated by 
the collection $\{ U_\phi: \phi\in S\}$ of subsets, 
where 
$$U_\phi:= \big\{[\phi,x]\in \KG(S): x\in \widehat{\ME(S)}_\phi \big\}\qquad (\phi\in S).$$

\smnoind

In Lemma \ref{lem:from-sg-to-gpd}(g) below, we will show that $\KG(S)$ is an ample groupoid. 
The arguments leading to this fact will also be needed in the proof of Proposition \ref{prop:Bool-inv-sg-to-ample-gpd}. 

\smnoind

Let us first present a lemma that depends on the following result.

\smallskip

\begin{lem}[{\cite[Lemma 2.2(2)]{L10}}]\label{lem:compliment}
Let $S$ be a Boolean inverse $\wedge$-monoid. 
If $\phi,\chi\in S$ satisfying $\chi\leq \phi$, then  there is a unique element 
$\phi\setminus \chi\in S$ such that 
$$(\phi\setminus \chi)\bot \chi\quad \text{and} \quad \phi = (\phi\setminus \chi)\vee \chi.$$ 
\end{lem}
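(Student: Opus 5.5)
The plan is to build $\phi\setminus\chi$ explicitly from the Boolean algebra $\ME(S)$ by restricting $\phi$ to the complement of the domain of $\chi$, and then to get uniqueness by computing the domain of any decomposition.

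\emph{Construction.} Put $e:=\phi^{-1}\phi$ and $f:=\chi^{-1}\chi$. Since $\chi\leq\phi$, we have $\chi=\phi f$. Hence $f=\chi^{-1}\chi=f\phi^{-1}\phi f=fef$, so $f\leq e$ in $\ME(S)$. Let $e\setminus f$ denote the relative complement of $f$ in $e$ inside the Boolean algebra $\ME(S)$, namely $e\wedge\neg f$. Then
$$(e\setminus f)f=0 \qquad\text{and}\qquad e=(e\setminus f)\vee_{\ME(S)} f.$$
Define $\phi\setminus\chi:=\phi(e\setminus f)$.

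\emph{Orthogonality.} First, $\phi^{-1}\chi=\phi^{-1}\phi f=ef=f$. Therefore
$$(\phi(e\setminus f))^{-1}\chi=(e\setminus f)\phi^{-1}\chi=(e\setminus f)f=0,$$
$$\phi(e\setminus f)\chi^{-1}=\phi(e\setminus f)f\phi^{-1}=0.$$

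\emph{The join.} The key point is to compare joins in $S$ with joins in $\ME(S)$. The idempotents $e\setminus f$ and $f$ are orthogonal, so by additivity their join $s$ exists in $S$. Since $e$ is an upper bound of both, $s\leq e$, and so $s$ is an idempotent. Every idempotent upper bound of $e\setminus f$ and $f$ is an upper bound in $S$, so it lies above $s$. Thus $s$ is the join in $\ME(S)$, which means $s=e$. Now the distributivity in Definition~\ref{defn:Boolean}(a) gives
$$\phi=\phi e=\phi\big((e\setminus f)\vee f\big)=\phi(e\setminus f)\vee\phi f=(\phi\setminus\chi)\vee\chi.$$

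\emph{Uniqueness.} Suppose $\theta\bot\chi$ and $\theta\vee\chi=\phi$, and put $g:=\theta^{-1}\theta$. Then $\theta\leq\phi$, so $\theta=\phi g$. It therefore suffices to show $g=e\setminus f$.
\begin{itemize}
\item Inversion is an order automorphism of $S$, so $\phi^{-1}=\theta^{-1}\vee\chi^{-1}$.
\item Applying the two-sided distributivity twice gives
$$e=\phi^{-1}\phi=\theta^{-1}\theta\vee\theta^{-1}\chi\vee\chi^{-1}\theta\vee\chi^{-1}\chi=g\vee f,$$
because the cross terms vanish by orthogonality. This is a join in $S$, hence, by the argument above, a join in $\ME(S)$.
\item Moreover $gf=\theta^{-1}(\theta\chi^{-1})\chi=0$.
\end{itemize}
So $g$ is a complement of $f$ in $e$ within the Boolean algebra $\ME(S)$. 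Since relative complements in a Boolean algebra are unique, $g=e\setminus f$, and hence $\theta=\phi\setminus\chi$.

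The step I expect to be the main obstacle is justifying the identification of joins of orthogonal idempotents in $S$ with their Boolean joins in $\ME(S)$, together with the distributive expansion of $\phi^{-1}\phi$. Both rely on the additivity axiom applied on each side, plus the fact that an element below an idempotent is itself an idempotent.
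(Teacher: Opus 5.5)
Your argument is correct. The paper gives no proof of this lemma to compare against: it is the one result imported from Lawson (Lemma 2.2(2) of L10), and the note calls itself self-contained modulo exactly this input. Your proof therefore fills the only outsourced step rather than offering an alternative route.

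Your construction $\phi\setminus\chi=\phi\,(e\wedge\neg f)=\phi\,\neg(\chi^{-1}\chi)$ restricts $\phi$ to the complement of the domain of $\chi$; this is the standard choice. You handle correctly the two points that need care:
\begin{enumerate}[(i)]
\item The $S$-join of two orthogonal idempotents lies below any idempotent upper bound. It is therefore itself idempotent and coincides with their join in the Boolean algebra $\ME(S)$.
\item Via $\theta=\phi\,\theta^{-1}\theta$, uniqueness reduces to uniqueness of relative complements in $\ME(S)$.
\end{enumerate}

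You never use the hypothesis that $S$ is a meet semilattice. Your proof thus establishes the lemma for any $S$ that is additive with $\ME(S)$ Boolean, i.e.\ any Boolean inverse monoid in the sense of Definition~\ref{defn:Boolean}(b). In the paper the $\wedge$-hypothesis only enters later, when $\phi\wedge\psi$ is formed in Lemma~\ref{lem:intersection}.

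One cosmetic point concerns uniqueness. Additivity only provides binary orthogonal joins, so the four-term expansion of $\phi^{-1}\phi$ should be read as $(\theta^{-1}\vee\chi^{-1})\phi=\theta^{-1}\phi\vee\chi^{-1}\phi$, followed by $\theta^{-1}\phi=g$ and $\chi^{-1}\phi=f$. These last two identities need no second distribution. They follow directly from $\theta=\phi g$, $\chi=\phi f$ and $g,f\leq e$: for instance, $\theta^{-1}\phi=g\phi^{-1}\phi=ge=g$.
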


\smallskip

\begin{lem}\label{lem:intersection}
Let $S$ be a Boolean inverse $\wedge$-monoid, $\phi,\psi\in S$ and $x\in \widehat{\ME(S)}$. 

\smnoind
(a) Suppose $\psi\leq \phi$ (i.e., $\psi = \phi \psi^{-1}\psi$). 
If $[\psi, x]\in U_\psi$, then $(\phi,x)\in S\ast \widehat{\ME(S)}$ and 
$[\phi,x] = [\psi,x]$. 
Moreover, $U_\psi \subseteq U_\phi$. 

\smnoind
(b) If $\ME(S)_{\psi,\phi}:= \{p\in \ME(S): \phi p = \psi p \},$
then  $U_\phi\cap U_\psi = {\bigcup}_{q\in \ME(S)_{\psi,\phi}} U_{\phi q} = U_{\phi\wedge \psi}$. 

\smnoind
(c) If $\phi\bot \psi$, then $U_\phi\cap U_\psi = \emptyset$ and $U_\phi \cup U_\psi = U_{\phi\vee \psi}$. 

\smnoind
(d) $U_\phi \setminus U_\psi = U_{\phi \setminus (\phi\wedge \psi)}$, where $\phi \setminus (\phi\wedge \psi)$ is as in Lemma \ref{lem:compliment}.
\end{lem}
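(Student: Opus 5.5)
The plan is to prove the four parts in order, using only the basic identities of inverse semigroups, the description of characters on $\ME(S)$, and Lemma \ref{lem:compliment}.

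\smallskip

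\emph{Part (a).} Suppose $\psi\le\phi$ and $[\psi,x]\in U_\psi$, so $x(\psi^{-1}\psi)=1$. Since $\psi=\phi\psi^{-1}\psi$, we get $\psi^{-1}\psi\le\phi^{-1}\phi$, hence $\psi^{-1}\psi=\phi^{-1}\phi\,\psi^{-1}\psi$. Because $x$ is a character, $x(\phi^{-1}\phi)=1$, so $(\phi,x)\in S\ast\widehat{\ME(S)}$. Now take $p=\psi^{-1}\psi$. Then $x(p)=1$ and $\phi p=\psi=\psi p$, so $[\phi,x]=[\psi,x]$. This gives $U_\psi\subseteq U_\phi$.

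\smallskip

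\emph{Part (b).} I would prove the chain of inclusions
$$U_\phi\cap U_\psi\subseteq\bigcup_{q\in\ME(S)_{\psi,\phi}}U_{\phi q}\subseteq U_{\phi\wedge\psi}\subseteq U_\phi\cap U_\psi.$$

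For the first inclusion, a point of $U_\phi\cap U_\psi$ is $[\phi,x]=[\psi,y]$. By definition of $\sim$, $x=y$ and there is $p$ with $x(p)=1$ and $\phi p=\psi p$. Put $q:=p\,\phi^{-1}\phi\,\psi^{-1}\psi$. Then $q\in\ME(S)_{\psi,\phi}$ and $x(q)=1$. Since $(\phi q)^{-1}\phi q=q\phi^{-1}\phi=q$, we have $x\in\widehat{\ME(S)}_{\phi q}$. Also $\phi q\le\phi$, so part (a) gives $[\phi,x]=[\phi q,x]\in U_{\phi q}$.

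For the second inclusion, take $q\in\ME(S)_{\psi,\phi}$. Then $\phi q=\psi q$, and this element lies below both $\phi$ and $\psi$, hence below $\phi\wedge\psi$. Part (a) then gives $U_{\phi q}\subseteq U_{\phi\wedge\psi}$.

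The third inclusion follows from part (a) applied to $\phi\wedge\psi\le\phi$ and to $\phi\wedge\psi\le\psi$.

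The only subtle point is making sure the chosen $q$ both satisfies $\phi q=\psi q$ and makes $\phi q$ a genuine element below $\phi$ with the right domain idempotent. Both are routine checks with commuting idempotents.

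\smallskip

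\emph{Part (c).} Suppose $\phi\perp\psi$, so $\phi^{-1}\psi=0$. If $[\phi,x]=[\psi,x]$, there is $p$ with $x(p)=1$ and $\phi p=\psi p$. Then
$$\phi^{-1}\phi\,p=\phi^{-1}\psi\,p=0.$$
But $x(\phi^{-1}\phi\,p)=1$, while every character sends $0$ to $0$. This contradiction shows $U_\phi\cap U_\psi=\emptyset$.

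The inclusion $U_\phi\cup U_\psi\subseteq U_{\phi\vee\psi}$ follows from part (a). For the reverse inclusion, I first need $(\phi\vee\psi)^{-1}(\phi\vee\psi)=\phi^{-1}\phi\vee\psi^{-1}\psi$. I would obtain this from additivity, using the identities $(\phi\vee\psi)^{-1}=\phi^{-1}\vee\psi^{-1}$ and $\phi^{-1}\psi=0$; this identity is the main obstacle I expect. Granting it, a character $x$ that is $1$ on the join is $1$ on $\phi^{-1}\phi$ or on $\psi^{-1}\psi$, since characters on a Boolean algebra preserve joins. Say $x(\phi^{-1}\phi)=1$, and take $p=\phi^{-1}\phi$. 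Then $(\phi\vee\psi)p=\phi p\vee\psi p=\phi\vee 0=\phi=\phi p$, so $[\phi\vee\psi,x]=[\phi,x]\in U_\phi$. The other case is symmetric.

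\smallskip

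\emph{Part (d).} By Lemma \ref{lem:compliment}, $\phi=(\phi\setminus(\phi\wedge\psi))\vee(\phi\wedge\psi)$ with the two pieces orthogonal. Part (c) then gives $U_\phi$ as the disjoint union $U_{\phi\setminus(\phi\wedge\psi)}\sqcup U_{\phi\wedge\psi}$. By part (b), $U_{\phi\wedge\psi}=U_\phi\cap U_\psi$. Hence
$$U_\phi\setminus U_\psi=U_\phi\setminus(U_\phi\cap U_\psi)=U_{\phi\setminus(\phi\wedge\psi)}.$$
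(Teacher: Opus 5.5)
Your proposal is correct and follows the paper's proof essentially step for step: part (a) is identical, part (b) uses the same three-inclusion chain, part (c) relies on the same additivity identity $(\phi\vee\psi)^{-1}(\phi\vee\psi)=\phi^{-1}\phi\vee\psi^{-1}\psi$ (which the paper likewise obtains only from $(\phi\vee\psi)^{-1}=\phi^{-1}\vee\psi^{-1}$ and $\phi^{-1}\psi=0=\psi^{-1}\phi$), and part (d) is the same disjoint-decomposition argument. The only differences are local and harmless. In (b) you shrink $q$ to $p\,\phi^{-1}\phi\,\psi^{-1}\psi$, whereas the paper simply uses $q=p$. In (c) you obtain $U_\phi\cap U_\psi=\emptyset$ directly from $\phi^{-1}\psi=0$, rather than via $\phi\wedge\psi=0$ and part (b).
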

\begin{proof}
(a) It follows from $\psi^{-1}\psi \leq \phi^{-1}\phi$ that $\widehat{\ME(S)}_\psi\subseteq \widehat{\ME(S)}_\phi$, which implies $(\phi,x)\in S\ast \widehat{\ME(S)}$. 
Moreover, since 
$$\psi \psi^{-1}\psi  = \psi = \phi \psi^{-1}\psi,$$ 
$\psi^{-1}\psi\in \ME(S)$ and $x(\psi^{-1}\psi) = 1$, one has $(\phi,x)\sim (\psi,x)$.
Hence, 
\begin{equation*}
[\psi,x] = [\phi,x]\in U_\phi \quad \text{for every} \quad [\psi, x]\in U_\psi.
\end{equation*}

\smnoind
(b) Pick any $z\in \widehat{\ME(S)}_\phi$ and $y\in \widehat{\ME(S)}_\psi$ that satisfy $[\phi,z] = [\psi,y]\in U_\phi\cap U_\psi$. 
Then $z=y$ and there exists $q\in \ME(S)$ with $z(q) = 1$ and $\phi q = \psi q$. 
This means that $q\in \ME(S)_{\psi,\phi}$ and $[\phi,z] = [\phi q, z]\in U_{\phi q}$; note that $(\phi q,z)\in S\ast \widehat{\ME(S)}$ as 
$$q\phi^{-1}\phi q = \phi^{-1}\phi\wedge q\quad \text{and} \quad z(\phi^{-1}\phi\wedge q) = z(\phi^{-1}\phi)\wedge z(q) =1.$$ 
Secondly, if $p\in \ME(S)_{\psi,\phi}$, then $\phi p = \psi p \leq \phi\wedge \psi$, and part (a) above implies that $U_{\phi p}\subseteq U_{\phi\wedge \psi}$. 
Finally, it follows from part (a) above that $U_{\phi\wedge \psi}\subseteq U_\phi\cap U_\psi$. 

\smnoind
(c) Note that $\phi\bot \psi$ implies $\phi\wedge \psi = 0$, because $\chi\leq \phi\wedge \psi$ gives 
$$\chi^{-1}\chi\leq \phi^{-1}\phi\wedge \psi^{-1}\psi = \phi^{-1}\phi\psi^{-1}\psi = 0.$$ 
Thus, it follows from part (b) above that $U_\phi\cap U_\psi = \emptyset$. 

On the other hand, as $\phi, \psi \leq \phi\vee \psi$, we know from part (a) above that $U_\phi \cup U_\psi \subseteq U_{\phi\vee \psi}$. 
Conversely, let $[\phi\vee \psi, z]\in U_{\phi\vee \psi}$. 
Since $\phi^{-1}\psi = 0 = \psi^{-1}\phi$ and $S$ is additive, 
$$1 = z\big((\phi\vee \psi)^{-1}(\phi\vee \psi)\big) = z\big((\phi^{-1}\vee \psi^{-1})(\phi\vee \psi)\big) = z(\phi^{-1}\phi) \vee z(\psi^{-1}\psi).$$ 
If $z(\phi^{-1}\phi) = 1$, then $[\phi,z]\in U_\phi$, and part (a) above produces 
$[\phi\vee \psi, z] = [\phi, z]\in U_\phi$. 
Similarly, if $z(\psi^{-1}\psi) = 1$, then $[\phi\vee \psi, z] \in U_\psi$. 

\smnoind
(d) As $\big(\phi \setminus (\phi\wedge \psi)\big)\bot (\phi\wedge \psi)$ and $\phi = \big(\phi \setminus (\phi\wedge \psi)\big)\vee (\phi\wedge \psi)$, part (c) implies 
$$U_{\phi \setminus (\phi\wedge \psi)}\cap U_{\phi\wedge \psi} = \emptyset \quad \text{and} \quad U_\phi = U_{\phi \setminus (\phi\wedge \psi)}\cup U_{\phi\wedge \psi}.$$
This, together with part (b) above, gives 
$$U_{\phi \setminus (\phi\wedge \psi)} = U_\phi \setminus U_{\phi \wedge \psi} = U_\phi \setminus \big(U_\phi \cap  U_\psi\big) = U_\phi \setminus U_\psi,$$
as asserted.
\end{proof}

\smallskip

\begin{lem}\label{lem:from-sg-to-gpd}
Let $S$ be a Boolean inverse $\wedge$-monoid and $\phi\in S$.

\smnoind
(a) $\{ U_\psi: \psi\in S\}$ is a basis for the topology on $\KG(S)$. 

\smnoind
(b) If $p \in \ME(S)$, then $U_{\phi p}= \big\{[\phi, x]: x\in \widehat{\ME(S)}_{\phi^{-1}\phi p}\big\}$.

\smnoind
(c) If $V\subseteq \widehat{\ME(S)}_{\phi^{-1}\phi}$ is an open subset, then $\big\{[\phi,x]\in \KG(S): x\in V \big\}$
is an open subset of $U_\phi$.

\smnoind
(d) If $W$ is an open subset of $U_\phi$, then there exists an open subset $\tilde W \subseteq \widehat{\ME(S)}_{\phi^{-1}\phi}$ such that  $W = \big\{[\phi,x]\in \KG(S): x\in \tilde W \big\}$. 

\smnoind
(e) The map $\theta_\phi:U_\phi\to \widehat{\ME(S)}_{\phi^{-1}\phi} = \widehat{\ME(S)}_{\phi}$ 
sending  $[\phi,x]$ to $x$ is a homeomorphism, and 
$\theta_\phi$ can be identified with the restriction of the source map on $U_\phi$ 
when $\widehat{\ME(S)}$ is identified with 
$\KG(S)^{(0)}$, via Equality \eqref{eqt:base-char}. 

\smnoind
(f) $U_\phi\in \Gamma_c(\KG(S))$.

\smnoind
(g) $\KG(S)$ is an ample groupoid. 
\end{lem}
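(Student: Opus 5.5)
I will prove the items in order, each resting on Lemma \ref{lem:intersection} and on one fact about the Boolean algebra: the sets $\widehat{\ME(S)}_{p}=\{x: x(p)=1\}$ ($p\in\ME(S)$) form a basis of clopen sets for $\widehat{\ME(S)}$.

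For (a), the sets $U_\psi$ cover $\KG(S)$ by definition. By Lemma \ref{lem:intersection}(b), $U_\phi\cap U_\psi=U_{\phi\wedge\psi}$, so the family is closed under finite intersections and is therefore a basis.

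For (b), take $x$ with $x(\phi^{-1}\phi p)=1$. Then $(\phi p)^{-1}(\phi p)=p\phi^{-1}\phi p=\phi^{-1}\phi p$, so $x\in\widehat{\ME(S)}_{\phi p}$. Moreover $\phi p\cdot p=\phi\cdot p$ and $x(p)=1$, so $[\phi p,x]=[\phi,x]$. The reverse inclusion follows by reading the same computation backwards.

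For (c), write the open set $V$ as a union of basic sets $\widehat{\ME(S)}_{p}$ with $p\le \phi^{-1}\phi$. By (b), the set in question is then $\bigcup U_{\phi p}$, which is open; each $U_{\phi p}\subseteq U_\phi$ by Lemma \ref{lem:intersection}(a).

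For (d), write $W=\bigcup_i U_{\psi_i}$ with $U_{\psi_i}\subseteq U_\phi$. By Lemma \ref{lem:intersection}(b) we may replace $\psi_i$ by $\phi\wedge\psi_i\le\phi$. For $\psi\le\phi$ we have $\psi=\phi\psi^{-1}\psi$, and (b) gives $U_\psi=\{[\phi,x]: x\in\widehat{\ME(S)}_{\psi^{-1}\psi}\}$. So $\tilde W=\bigcup_i\widehat{\ME(S)}_{\psi_i^{-1}\psi_i}$ works.

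For (e), the map $\theta_\phi$ is well defined and bijective because $[\phi,x]=[\phi,y]$ forces $x=y$. Parts (c) and (d) say exactly that $\theta_\phi$ maps open sets to open sets in both directions, so it is a homeomorphism. For the identification with the source map, I compute $[\phi,x]^{-1}[\phi,x]=[\phi^{-1}\phi,x]$ using \eqref{eqt:inver} and \eqref{eqt:prod-coarse-gpd}. Applying Lemma \ref{lem:intersection}(a) with $\phi^{-1}\phi\le 1$ gives $[\phi^{-1}\phi,x]=[1,x]$, which corresponds to $x$ under \eqref{eqt:base-char}.

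For (f), $U_\phi$ is open, and it is compact because $\widehat{\ME(S)}_\phi$ is clopen in a compact space. To see that $U_\phi$ is a bisection I need both $s$ and $r$ injective on it. Injectivity of $s$ is (e). For $r$, I compute $r([\phi,x])=[\phi\phi^{-1},\alpha_\phi(x)]=[1,\alpha_\phi(x)]$, and $\alpha_\phi$ is injective.

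Part (g) is the main obstacle. One must show that multiplication and inversion are continuous, that $s$ is a local homeomorphism, and that $\KG(S)$ is Hausdorff with unit space homeomorphic to $\widehat{\ME(S)}$. The local homeomorphism property comes from (e).

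For inversion, \eqref{eqt:inver} shows that it maps $U_\phi$ onto $U_{\phi^{-1}}$, so inversion sends basic open sets to basic open sets.

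For multiplication, a short computation gives $U_\phi U_\psi\subseteq U_{\phi\psi}$. I expect the reverse preimage statement $m^{-1}(U_\chi)\cap(U_\phi\times U_\psi)$ to follow by writing $U_\chi\cap U_{\phi\psi}=U_{\chi\wedge\phi\psi}$ and pulling this set back along the coordinates $x$ via $\theta$.

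Hausdorffness needs the most care. Take distinct points $[\phi,x]\ne[\psi,y]$. If $x\ne y$, separate them by $s$-preimages of disjoint clopen sets. If $x=y$, then $x\notin\tilde{U}$, where $\tilde U$ is the $\theta_\phi$-image of $U_\phi\cap U_\psi$. By Lemma \ref{lem:intersection}(d), $U_\phi\setminus U_\psi=U_{\phi\setminus(\phi\wedge\psi)}$ is open and contains $[\phi,x]$, while $U_\psi$ contains $[\psi,y]$, so these two open sets separate the points. Finally, the unit space is $U_1\cong\widehat{\ME(S)}$ by (e).
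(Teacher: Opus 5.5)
Your proposal is correct and follows essentially the paper's route for (a)--(f). The only differences are cosmetic: in (d) you replace $\psi_i$ by $\phi\wedge\psi_i$, and in (f) you get injectivity of the range map from $r([\phi,x])=[1,\alpha_\phi(x)]$ rather than via \eqref{eqt:U-phi-inv}.

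In (g) you supply details that the paper asserts in one line. Your multiplication sketch does go through, since on $(U_\phi\times U_\psi)\cap\KG(S)^{(2)}$ the product equals $\theta_{\phi\psi}^{-1}\circ s\circ\mathrm{pr}_2$. Your Hausdorffness argument via Lemma \ref{lem:intersection}(d) is also correct; the paper does not address this explicitly, since its definition of ample only asks that the unit space be Hausdorff.
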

\begin{proof}
(a) This follows directly from Lemma \ref{lem:intersection}(b).

\smnoind
(b) It follows from Lemma \ref{lem:intersection}(a) that 
\begin{align*}\label{eqt:U-phi-p}
	U_{\phi p}
	& = \big\{[\phi p, x]: x\in \widehat{\ME(S)}_{\phi p}\big\}\nonumber\\
	&  = \big\{[\phi, x]: x\in \widehat{\ME(S)}_{\phi p}\big\}
	= \big\{[\phi, x]: x\in \widehat{\ME(S)}_{\phi^{-1}\phi p}\big\}. 
\end{align*}
(the third equality follows from $p\phi^{-1}\phi p = \phi^{-1}\phi p$).

\smnoind
(c) As $\big\{\widehat{\ME(S)}_e:e \in \ME(S)\big\}$
is a basis for the topology on $\widehat{\ME(S)}$ (see e.g., \cite[p.328]{GH}), there is a family $\{e_j\}_{j\in \KJ}$ in $\ME(S)$ with
$$V = {\bigcup}_{j\in \KJ} \widehat{\ME(S)}_{e_j}.$$
If $j\in \KJ$, then the condition $\widehat{\ME(S)}_{e_j}\subseteq V \subseteq \widehat{\ME(S)}_{\phi^{-1}\phi}$ implies that $x(e_j) = x(\phi^{-1}\phi e_j)$ for each  $x \in \widehat{\ME(S)}$; which is equivalent to $e_j = \phi^{-1}\phi e_j$.
Thus, part (b) above gives 
$$\big\{[\phi,x]\in \KG(S): x\in V \big\} = {\bigcup}_{j\in \KJ} \big\{[\phi, x]: x\in \widehat{\ME(S)}_{\phi^{-1}\phi e_j}\big\} = {\bigcup}_{j\in \KJ} U_{\phi e_j},$$
and hence 
$$\big\{[\phi,x]\in \KG(S): x\in V \big\}$$ 
is an open subset of $U_\phi$.

\smnoind
(d) Part (a) above produces a family $\{\psi_i\}_{i\in \KI}$ in $S$ with 
$$W = {\bigcup}_{i\in \KI} U_{\psi_i}.$$
Set $\tilde W:= {\bigcup}_{i\in \KI} {\bigcup}_{p\in \ME(S)_{\psi_i,\phi}} \widehat{\ME(S)}_{\phi^{-1}\phi p}$.
Clearly,  $\tilde W$ is an open subset of $\widehat{\ME(S)}_{\phi^{-1}\phi}$. 
For each $i\in \KI$, it follows from  $U_{\psi_i}\subseteq W \subseteq U_\phi$ and Lemma \ref{lem:intersection}(b) that 
$$U_{\psi_i} =  U_\phi \cap U_{\psi_i}= {\bigcup}_{p\in \ME(S)_{\psi_i,\phi}} U_{\phi p}.$$
This and part (b) above imply that 
\begin{align*}
	W 
	& = {\bigcup}_{i\in \KI} {\bigcup}_{p\in \ME(S)_{\psi_i,\phi}} U_{\phi p}\\ 
	& = \Big\{ [\phi,x]: x\in {\bigcup}_{i\in \KI} {\bigcup}_{p\in \ME(S)_{\psi_i,\phi}} \widehat{\ME(S)}_{\phi^{-1}\phi p}\Big\}
	= \big\{[\phi,x]: x\in \tilde W \big\}.
\end{align*}

\smnoind
(e) The first claim follows from parts (c) and (d) above, while the second claim follows from that $[\phi,x]^{-1} \cdot [\phi,x] = [\phi^{-1}, \alpha_\phi(x)] \cdot [\phi,x] = [1,x]$. 

\smnoind
(f) Since $\widehat{\ME(S)}_{\phi^{-1}\phi}$ is a closed subset of the compact Hausdorff space $\widehat{\ME(S)}$, part (e) above ensures that $U_\phi$ is compact.
On the other hand, since the range of $[\phi,x]$ is the source of $[\phi,x]^{-1}$, we know from \eqref{eqt:inver} that 
\begin{equation}\label{eqt:U-phi-inv}
	U_{\phi^{-1}} = \big\{[\phi^{-1},y]\in \KG(S): y\in \widehat{\ME(S)}_{\phi^{-1}} \big\} = \big\{[\phi,x]^{-1}\in \KG(S): x\in \widehat{\ME(S)}_\phi \big\}.
\end{equation}
Therefore, by parts (a) and (e) above,  $U_\phi$ is a compact open bisection of $\KG(S)$.  

\smnoind
(g) By part (a) above, together with Equalities \eqref{eqt:prod-coarse-gpd} and \eqref{eqt:inver}, we know that $\KG(S)$ is a topological groupoid. 
Moreover, parts (e) and (f) above tell us that the topological groupoid $\KG(S)$ is an \'{e}tale groupoid, and hence is an ample groupoid, because the compact Hausdorff space $\widehat{\ME(S)}$ is totally disconnected. 
\end{proof}

\smallskip

Our next remark presents a connection between the construction of $\KG(S)$ and the construction of the coarse groupoid of a uniformly locally finite coarse space. 
For details about uniformly locally finite coarse spaces and coarse groupoids, the reader can refer to \cite{Roe,STY}.

\smallskip

\begin{rem}\label{rem:coarse-groupoid}
For a set $X$, it is well-known that the set $\I(X)$ of all partial bijections on $X$ (recall that a partial bijection is a bijection from a subset of $X$ onto a subset of $X$) is a Boolean inverse $\wedge$-monoid. 

Suppose that $(X,\CE)$ is a uniformly locally finite coarse space. 
Let $\BT_\CE$ be the set of all ``partial translations'' as in \cite[p.63]{Roe} (i.e., the set of elements in $\I(X)$ with their graphs being elements in $\CE$). 
It can be shown that  $\BT_\CE$ is a Boolean inverse $\wedge$-monoid (see, e.g., \cite[Lemma 4.5(a)]{NT-coarse-inv}). 
Moreover, we have  
\begin{equation*}
	\ME(\BT_\CE) = \ME(\I(X)) = \{\id_Y:Y\subseteq X\}.
\end{equation*}
This implies that $\widehat{\ME(\BT_\CE)}$ can be identified with the Stone-\v Cech compactification $\beta X$ of $X$. 
By Lemma \ref{lem:from-sg-to-gpd}(e), the ``source map'' 
$$\theta_\phi:U_\phi\to \widehat{\ME(\BT_\CE)}_{\phi^{-1}\phi}$$ 
is a homeomorphism for every $\phi\in \BT_\CE$. 

From this, we see that $\KG(\BT_\CE)$ coincides with the topological groupoid $\mathnormal{G}(\mathscr{G}(X))$ as constructed in \cite[\S 2.6]{STY} with respect to  the inverse semigroup  $\mathscr{G}(X)$ as  in \cite[Definition 3.1]{STY} (notice that the inverse semigroup $\mathscr{G}(X)$ is the same as the inverse semigroup $\varGamma_\CE$  defined in the paragraph preceding \cite[Lemma 2.8]{STY}, which is exactly our inverse semigroup $\BT_\CE$). 
Therefore, $\KG(\BT_\CE)$ coincides with the coarse groupoid $\KG_\CE$ for $(X,\CE)$ (because of  \cite[Proposition 3.2]{STY}).
\end{rem}

\smallskip

The following result was stated in \cite[Theorem 3.3(2)]{Steinb} (in the more general situation of Boolean inverse monoids), and it claimed \cite{Exel-inv-sg}, \cite{Exel-Reconst} and \cite{LL} as its references. 
We present here a self-contained proof (see  also \cite[Theorem A.10]{Rena}).

\smallskip

\begin{prop}\label{prop:Bool-inv-sg-to-ample-gpd}
Let $S$ be a Boolean inverse $\wedge$-monoid. 
The map $\varepsilon:S \to \Gamma_c(\KG(S))$ given by (see Lemma \ref{lem:from-sg-to-gpd}(f)) 
$$\varepsilon(\phi) := U_\phi$$
is a semigroup isomorphism. 
\end{prop}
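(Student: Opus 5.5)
We have to show three things: $\varepsilon$ is multiplicative, injective, and surjective onto $\Gamma_c(\KG(S))$.

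\emph{Multiplicativity.} In $\Gamma_c(\KG(S))$ the product is $UV=\{gh: g\in U, h\in V, (g,h)\in \KG(S)^{(2)}\}$, and we show $U_\phi U_\psi = U_{\phi\psi}$. For $\supseteq$, take $[\phi\psi,y]\in U_{\phi\psi}$. Then $y((\phi\psi)^{-1}\phi\psi)=y(\psi^{-1}\phi^{-1}\phi\psi)=1$. Since $\psi^{-1}\phi^{-1}\phi\psi\le\psi^{-1}\psi$, we get $y\in\widehat{\ME(S)}_\psi$. Moreover $\alpha_\psi(y)(\phi^{-1}\phi)=1$, so $x:=\alpha_\psi(y)\in\widehat{\ME(S)}_\phi$. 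Hence $[\phi\psi,y]=[\phi,x]\cdot[\psi,y]$ by \eqref{eqt:prod-coarse-gpd}. For $\subseteq$, take $[\phi,x]\cdot[\psi,y]$ with $x=\alpha_\psi(y)$. Reading the same computation backwards shows $y\in\widehat{\ME(S)}_{\phi\psi}$, and the product equals $[\phi\psi,y]\in U_{\phi\psi}$.

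\emph{Injectivity.} Suppose $U_\phi=U_\psi$. By Lemma \ref{lem:from-sg-to-gpd}(e), the source sets agree: $\widehat{\ME(S)}_\phi=\widehat{\ME(S)}_\psi$. By Stone duality this gives $\phi^{-1}\phi=\psi^{-1}\psi=:e$. Since $U_\phi=U_\phi\cap U_\psi$, Lemma \ref{lem:intersection}(b) gives $U_\phi=U_{\phi\wedge\psi}$. Lemma \ref{lem:intersection}(d) then gives $U_{\phi\setminus(\phi\wedge\psi)}=U_\phi\setminus U_\psi=\emptyset$. A basic set $U_\chi$ is empty exactly when $\widehat{\ME(S)}_{\chi^{-1}\chi}=\emptyset$, that is, when $\chi^{-1}\chi=0$, which forces $\chi=0$. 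So $\phi\setminus(\phi\wedge\psi)=0$, and by Lemma \ref{lem:compliment} we get $\phi=\phi\wedge\psi\le\psi$. By symmetry $\psi\le\phi$, hence $\phi=\psi$. The same $\emptyset$ argument is the cleanest route.

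\emph{Surjectivity, which is the main step.} Let $B$ be a compact open bisection.
\begin{enumerate}[(i)]
\item By Lemma \ref{lem:from-sg-to-gpd}(a), $B$ is a union of basic sets $U_\psi$. By compactness, $B=U_{\psi_1}\cup\cdots\cup U_{\psi_n}$.
\item Disjointify the cover. Using Lemma \ref{lem:intersection}(d) repeatedly, replace $\psi_k$ by successive differences $\phi_k$, giving $B=\bigcup_k U_{\phi_k}$ with the $U_{\phi_k}$ pairwise disjoint. Concretely, $U_{\psi_2}\setminus U_{\psi_1}=U_{\psi_2\setminus(\psi_2\wedge\psi_1)}$, and we iterate; each step stays within basic sets.
\item Show that disjointness of $U_{\phi_j}$ and $U_{\phi_k}$ inside the bisection $B$ forces $\phi_j\bot\phi_k$. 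Because $B$ is a bisection, the source images $s(U_{\phi_j})$ and $s(U_{\phi_k})$ are disjoint, which by Lemma \ref{lem:from-sg-to-gpd}(e) means $\widehat{\ME(S)}_{\phi_j^{-1}\phi_j}\cap\widehat{\ME(S)}_{\phi_k^{-1}\phi_k}=\emptyset$, so $\phi_j^{-1}\phi_j\phi_k^{-1}\phi_k=0$ and hence $\phi_j^{-1}\phi_k=0$. Range images are also disjoint; applying \eqref{eqt:U-phi-inv} gives $\phi_j\phi_k^{-1}=0$.
\item Conclude with Lemma \ref{lem:intersection}(c) by induction: $\phi:=\phi_1\vee\cdots\vee\phi_n$ exists, and $U_\phi=B$. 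For the induction we need $\phi_1\vee\phi_2\,\bot\,\phi_3$. This follows from additivity: $(\phi_1\vee\phi_2)^{-1}\phi_3=(\phi_1^{-1}\phi_3)\vee(\phi_2^{-1}\phi_3)=0$, and similarly on the other side.
\end{enumerate}

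The step I expect to need the most care is (iii), deriving orthogonality from the bisection property; in particular, identifying the range of $U_\phi$ with $\widehat{\ME(S)}_{\phi\phi^{-1}}$ via \eqref{eqt:inver}.
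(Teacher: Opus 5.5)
Your proof is correct and follows the paper's argument essentially step for step: the same computation for multiplicativity, and injectivity via Lemma \ref{lem:intersection}(d) together with $U_\chi=\emptyset\Rightarrow\chi=0$. Surjectivity also matches (compactness, disjointification, orthogonality from injectivity of source and range on the bisection), except that you disjointify the whole cover first and use additivity in the induction, whereas the paper merges one set at a time; note one small slip in (iii), where disjoint sources, i.e.\ $\phi_j^{-1}\phi_j\phi_k^{-1}\phi_k=0$, give $\phi_j\phi_k^{-1}=0$ (not $\phi_j^{-1}\phi_k=0$) and disjoint ranges give $\phi_j^{-1}\phi_k=0$, but since you derive both conditions, $\phi_j\bot\phi_k$ is unaffected.
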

\begin{proof}
For every $\phi, \psi \in S$, one has 
\begin{align*}
	\varepsilon(\phi) \varepsilon(\psi) & = \big\{[\phi\psi,y] \in \KG(S): y \in \widehat{\ME(S)}_{\psi}; \alpha_\psi(y) \in \widehat{\ME(S)}_{\phi}\big\} \\
	& = \big\{[\phi\psi,y] \in \KG(S): y \in \widehat{\ME(S)}; y(\psi^{-1} \psi) = y(\psi^{-1}\phi^{-1}\phi\psi) = 1\big\} \\
	& = \big\{[\phi\psi,y] \in \KG(S): y \in \widehat{\ME(S)}; y(\psi^{-1}\phi^{-1}\phi\psi) = 1\big\} \\
	& = \varepsilon(\phi\psi)
\end{align*}
(as $\psi^{-1}\phi^{-1}\phi\psi \leq \psi^{-1}\psi$). 
This shows that $\varepsilon$ is a semigroup homomorphism. 
 
For the injectivity of $\varepsilon$, assume that $\phi,\psi\in S$ with $U_\phi = U_\psi$. 
By Lemma \ref{lem:intersection}(d),  
$$U_{\phi \setminus (\phi\wedge \psi)} = \emptyset,$$
which produces $\phi \setminus (\phi\wedge \psi) = 0$. 
Hence, $\phi = \phi \wedge \psi$ (see Lemma \ref{lem:compliment}). 
By symmetry, one also has $\psi = \phi \wedge \psi$.

For the surjectivity, we consider $W\in \Gamma_c(\KG(S))$. 
Lemma \ref{lem:from-sg-to-gpd}(a) gives a family $\{\phi_i\}_{i\in \KI}$ of elements in $S$ such that
$$W = {\bigcup}_{i\in \KI}U_{\phi_i}.$$ 
Since $W$ is compact and $U_{\phi_i}$ $(i \in \KI)$ are open, we obtain a finite subset $\{\phi_1,\dots,\phi_n\} \subseteq \{\phi_i\}_{i\in \KI}$ with 
$W = {\bigcup}_{k=1}^n U_{\phi_k}.$
It suffices to find $\psi\in S$ satisfying
$${\bigcup}_{k=1}^n U_{\phi_k} = U_\psi.$$ 
Indeed, Lemma \ref{lem:intersection}(d) implies 
$$U_{\phi_1} \cup U_{\phi_2} = U_{\phi_1} \cup \big(U_{\phi_2}\setminus U_{\phi_1}\big) = U_{\phi_1} \cup U_{\chi},$$
where $\chi:= \phi_2\setminus (\phi_2 \wedge \phi_1)$. 
As $U_{\phi_1}$ and $U_{\chi} = U_{\phi_2}\setminus U_{\phi_1}$ are disjoint, and the source map is injective on $W$, Lemma \ref{lem:from-sg-to-gpd}(e) gives  $\widehat{\ME(S)}_{\phi_1^{-1}\phi_1}\cap \widehat{\ME(S)}_{\chi^{-1}\chi} = \emptyset$.  
Hence, 
$$\phi_1^{-1}\phi_1 \wedge \chi^{-1}\chi = 0.$$ 
By considering the injectivity of the range map on $W$, one can use Lemma \ref{lem:from-sg-to-gpd}(f) (and in particular, \eqref{eqt:U-phi-inv})  to show that  
$$\widehat{\ME(S)}_{\phi_1\phi_1^{-1}}\cap \widehat{\ME(S)}_{\chi\chi^{-1}} = \emptyset,$$ 
and the same argument as above gives $\phi_1\phi_1^{-1} \wedge \chi\chi^{-1} = 0$. 
Therefore, $\phi_1\bot \chi$.
It then follows from Lemma \ref{lem:intersection}(c) that 
$$U_{\phi_1} \cup U_{\phi_2} = U_{\phi_1\vee \chi}.$$
Inductively, one obtains the required element $\psi\in S$. 
\end{proof}

\smallskip

By Remark \ref{rem:coarse-groupoid} and Proposition \ref{prop:Bool-inv-sg-to-ample-gpd}, if $(X,\CE)$ is a uniformly locally finite coarse space, then one can recover the inverse monoid $\BT_\CE$ from the coarse groupoid $\KG_\CE$. 
Using this and \cite[Theorem 4.7]{NT-coarse-inv}, we can construct $(X,\CE)$ from $\KG_\CE$. 
Furthermore, as established in \cite{NT-coarse-inv}, many studies of the coarse space $(X,\CE)$ can be done through the corresponding studies of the Boolean inverse $\wedge$-monoid $\BT_\CE$. 

\smallskip

\section*{Acknowledgement}

\smallskip

We are grateful to Professor Jean Renault for sharing his article \cite{Rena}, in which Theorem A.10 establishes a stronger result than the one presented in this note.

\end{document}